\newtheorem{theorem}{Theorem}
\newtheorem{proposition}[theorem]{Proposition}
\newtheorem{corollary}[theorem]{Corollary}
\newtheorem{lemma}[theorem]{Lemma}
\DeclareMathOperator{\PSL}{PSL}
\DeclareMathOperator{\SL}{SL}\DeclareMathOperator{\Aut}{Aut}
\DeclareMathOperator{\stab}{stab}
\DeclareMathOperator{\Mod}{Mod}
\DeclareMathOperator{\Sym}{Sym}
\DeclareMathOperator{\Out}{Out}
\DeclareMathOperator{\Z}{\mathbb{Z}}
\def\Z{\mathbb{Z}}
\def\N{\mathbb{N}}
\begin{document}


\title{\textbf{Quotients of mapping class groups from $\Out(F_n)$}}
\author{Khalid Bou-Rabee\thanks{K.B. supported in part by NSF grant
    DMS-1405609}\and Christopher J. Leininger \thanks{C.L. supported in part by NSF
grant DMS-1510034.}}
\maketitle


\begin{abstract}
We give a short proof of Masbaum and Reid's result that mapping class groups involve any finite group, appealing to free quotients of surface groups and a result of Gilman, following Dunfield--Thurston.
\end{abstract}

\vskip.1in
{\small{\bf keywords:}
\emph{mapping class groups, involve, finite groups}}
\vskip.1in

Let $\Sigma_g$ be a closed oriented surface of genus $g$ and $F_n$ a nonabelian free group of rank $n$.
The fundamental group, $\pi_1(\Sigma_g)$, is residually free \cite{MR0140562} and $F_n$ has a wealth finite index subgroups \cite[pp. 116]{MR2109550}.
In \cite{MR2257389}, N.~Dunfield and W.~Thurston consider the action of the mapping class group $\Mod(\Sigma_g)$ on the set of finite index normal subgroups of $\pi_1(\Sigma_g)$ with finite simple quotients, and in particular those containing the kernel of an epimorphism $\pi_1(\Sigma_g) \to F_g$.  Their observations relating to work of R.~Gilman \cite{MR0435226}, give rise to finite index subgroups of $\Mod(\Sigma_g)$ that surjects symmetric groups of arbitrarily large order; see the discussion following the proof of \cite[Theorem~7.4]{MR0140562}.

\begin{theorem}[Dunfield--Thurston] \label{thm:main1} 
For all $g \geq 3$, $r \geq 1$, there exists an epimorphism $\phi : \pi_1(\Sigma_g) \to F_g$ and a prime $q$, so that
$$
\left\{ N \triangleleft \pi_1(\Sigma_g) \mid \ker \phi < N \mbox{ and } \pi_1(\Sigma_g)/N \cong \PSL(2,q) \right\}
$$
has at least $r$ elements, and its (finite index) stabilizer in $\Mod(\Sigma_g)$ acts as the full symmetric group on this set.
\end{theorem}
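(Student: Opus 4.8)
The plan is to realize $\phi$ geometrically and then reduce the entire statement to the action of $\Out(F_g)$ on normal subgroups of $F_g$, where Gilman's theorem applies. First I would fix a genus-$g$ handlebody $\mathcal H_g$ with $\partial \mathcal H_g = \Sigma_g$ and let $\phi \colon \pi_1(\Sigma_g) \to \pi_1(\mathcal H_g) \cong F_g$ be the surjection induced by inclusion; this is the free quotient alluded to in the introduction, and choosing it this way (rather than an arbitrary free quotient) will guarantee a supply of mapping classes realizing the needed symmetries. By the correspondence theorem, $N \mapsto \phi(N)$ identifies
$$
\Omega' = \{ N \triangleleft \pi_1(\Sigma_g) \mid \ker\phi < N,\ \pi_1(\Sigma_g)/N \cong \PSL(2,q) \}
$$
with $\Omega = \{ \bar N \triangleleft F_g \mid F_g/\bar N \cong \PSL(2,q)\}$, and the latter is in bijection with $\mathrm{Epi}(F_g,\PSL(2,q))/\Aut(\PSL(2,q))$.

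For the counting and the finite-index claim, I would observe that the set $\mathcal N$ of \emph{all} normal subgroups of $\pi_1(\Sigma_g)$ with quotient isomorphic to $\PSL(2,q)$ is finite (a finitely generated group has finitely many subgroups of each finite index) and is preserved by $\Mod(\Sigma_g)$, since a mapping class acts on $\pi_1(\Sigma_g)$ by an automorphism up to conjugacy and hence permutes normal subgroups of a fixed finite quotient type. As $\Omega' \subseteq \mathcal N$ with $\mathcal N$ finite, the stabilizer of $\Omega'$ is automatically of finite index. To get $|\Omega'| = |\Omega| \geq r$ for suitable $q$, I would use that a random $g$-tuple in $\PSL(2,q)$ generates with probability tending to $1$ as $q \to \infty$ (for $g \geq 2$), so $|\mathrm{Epi}(F_g,\PSL(2,q))| = (1-o(1))|\PSL(2,q)|^{g}$; dividing by $|\Aut(\PSL(2,q))| = |\PGL(2,q)| = q(q^2-1)$ gives $|\Omega| \sim \tfrac12 |\PSL(2,q)|^{\,g-1} \to \infty$. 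Since $g \geq 3$, choosing $q$ large makes $|\Omega| \geq r$.

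The heart of the argument is that the stabilizer realizes the full symmetric group. The handlebody mapping class group $\Mod(\mathcal H_g)$ sits inside $\Mod(\Sigma_g)$ by restriction to the boundary; every such class preserves $\ker\phi$ (from the commuting square $\phi \circ f_* = h_* \circ \phi$ with $h_* \in \Aut(F_g)$, since $h_*$ injective forces $f_*(\ker\phi)=\ker\phi$), and the resulting map $\Mod(\mathcal H_g) \to \Out(F_g)$ is the classical surjection. Hence the image of $\mathrm{Stab}(\Omega') \to \Sym(\Omega')$ contains the image of $\Out(F_g) \to \Sym(\Omega)$ (inner automorphisms fix every normal subgroup, so the $\Aut(F_g)$-action factors through $\Out(F_g)$). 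Gilman's theorem, valid because $g \geq 3$ and $\PSL(2,q)$ is simple, gives that this image contains the alternating group $\Alt(\Omega)$, hence equals $\Alt(\Omega)$ or $\Sym(\Omega)$.

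The main obstacle is the final upgrade from alternating to the full symmetric group. Since $\Alt(\Omega)$ is already present, it suffices to produce one element of $\Out(F_g)$ acting as an odd permutation of $\Omega$; equivalently, to show the composite $\Out(F_g) \to \Sym(\Omega) \xrightarrow{\mathrm{sgn}} \{\pm 1\}$ is nontrivial. As $\Out(F_g)^{\mathrm{ab}} \cong \mathbb Z/2$ for $g \geq 3$, the natural candidate is the involution $\iota$ inverting a single free generator, and I would compute the parity of the permutation it induces on $\Omega$ by analyzing its fixed points, i.e.\ the classes $\bar N$ stabilized by $\iota$, arranging if necessary through the choice of $q$ that this parity is odd. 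Confirming that this explicit involution acts oddly is the delicate point; the rest is assembling the two classical surjections (handlebody onto $\Out(F_g)$, and Gilman's) with the correspondence theorem and the counting estimate.
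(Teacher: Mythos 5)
Your overall architecture matches the paper's: realize $\phi$ as the map induced by including $\Sigma_g$ as the boundary of a handlebody, use the classical surjection $\Mod(\mathcal{H}_g) \to \Out(F_g)$ together with the equivariant correspondence bijection to transport the $\Out(F_g)$--action on $\PSL(2,q)$--defining subgroups of $F_g$ into the stabilizer's action on the set in the statement, and get finite index of the stabilizer from finiteness of the set of all normal subgroups with the given finite quotient. Your counting of $|\Omega|$ via random generation of $\PSL(2,q)$ is a legitimate alternative to the paper's route (the paper instead deduces that $|X(F_g,\PSL(2,p))|$ is unbounded over any infinite set of primes from Gilman's residual-finiteness corollary), though it imports a heavier external result than the paper needs.

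The genuine gap is your last step. You invoke only the weak half of Gilman's theorem (the image in $\Sym(\Omega)$ contains the alternating group) and then try to upgrade to the full symmetric group by exhibiting an odd permutation, namely by computing the parity of the generator-inverting involution $\iota$ and ``arranging through the choice of $q$'' that it is odd. This cannot be patched generically: Gilman's theorem also asserts that the purely alternating case genuinely occurs for infinitely many primes, and for those primes no element of $\Out(F_g)$ whatsoever acts as an odd permutation, so whether your parity computation can succeed depends on $q$ in a way your argument does not control. Determining for which primes the action is symmetric rather than alternating is precisely the hard arithmetic content of Gilman's refined statement, so your plan would amount to re-proving it. The fix is exactly what the paper does: use the clause of Gilman's theorem that the symmetric case occurs for \emph{infinitely many} primes, restrict to that infinite set of primes, and then produce within it a prime for which the number of defining subgroups is at least $r$ --- your asymptotic count (or the paper's unboundedness argument) does this. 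With that substitution your proof closes; without it, the ``delicate point'' you flag is an unproven claim whose verification is the main difficulty of the whole theorem.
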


We explain the proof of this in Section~\ref{S:handlebody free}.  In this note, we observe that since every finite group embeds in some finite symmetric group, Theorem~\ref{thm:main1} provides a new elementary proof of a result of G.~Masbaum and A.~Reid \cite{MR2967055}. 
Recall that a group $G$ \emph{involves} a group $H$ if there exists a finite index subgroup $L \leq G$ and a surjective map $\phi: L \to H$.

\begin{corollary} [Masbaum--Reid] \label{cor:involve}
Let $\Sigma_{g,m}$ be a surface of genus $g$ with $m$ punctures.  If $3g-3+m \geq 1$ (or $g=1$ and $m=0$) then $\Mod(\Sigma_{g,m})$ involves any finite group.
\end{corollary}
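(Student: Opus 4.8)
The plan is to deduce the Corollary from Theorem~\ref{thm:main1} by a chain of reductions, resting on two elementary observations about the relation \emph{involves}. First, involving is transitive and is inherited under surjections: if $G \twoheadrightarrow K$ and $K$ involves $H$, then pulling back a finite-index subgroup $M \leq K$ carrying a surjection $M \twoheadrightarrow H$ to its preimage $L \leq G$ produces a finite-index subgroup of $G$ surjecting onto $H$, so $G$ involves $H$. Second, any group $G$ containing a nonabelian free group $F$ of finite index involves every finite group: by Nielsen--Schreier, $F$ has finite-index subgroups of arbitrarily large rank, and a free group of rank at least the number of generators of $H$ surjects onto $H$; such a subgroup is finite index in $G$. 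Combining the first observation with Cayley's theorem upgrades Theorem~\ref{thm:main1}: for $g \geq 3$ the stabilizer is a finite-index subgroup of $\Mod(\Sigma_g)$ surjecting onto $\Sym_N$ with $N$ arbitrarily large, and since every finite group embeds into some $\Sym_N$, we conclude that $\Mod(\Sigma_g)$ involves every finite group for all $g \geq 3$.

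Next I would strip off the punctures. Forgetting a marked point gives a surjection $\Mod(\Sigma_{g,m}) \twoheadrightarrow \Mod(\Sigma_{g,m-1})$ whenever the target still has negative Euler characteristic (the Birman exact sequence), and capping the puncture of a once-punctured torus gives a surjection $\Mod(\Sigma_{1,1}) \twoheadrightarrow \Mod(\Sigma_1)$. Iterating these standard surjections, for every $g \geq 1$ one obtains a surjection from $\Mod(\Sigma_{g,m})$ onto $\Mod(\Sigma_g)$, and for $g = 0$ a surjection onto $\Mod(\Sigma_{0,4})$. By the inheritance property, it therefore suffices to treat the closed surfaces $\Sigma_g$ with $g \in \{1,2\}$ together with the four-punctured sphere $\Sigma_{0,4}$; the range $g \geq 3$ is already handled.

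Finally I would dispatch the small-complexity cases. The four-punctured sphere and the torus are virtually free: $\mathrm{PMod}(\Sigma_{0,4}) \cong F_2$ has finite index in $\Mod(\Sigma_{0,4})$, and $\Mod(\Sigma_1) \cong \SL(2,\Z)$ contains a rank-two free group of finite index, so both involve every finite group by the second observation (this also settles the torus case $g=1,\,m=0$ directly). The remaining and genuinely delicate case is $g=2$, where $\Mod(\Sigma_2)$ is not virtually free and Theorem~\ref{thm:main1} does not apply. Here I would exploit the hyperelliptic involution $\iota$: since $\iota$ is central in $\Mod(\Sigma_2)$, every mapping class is symmetric for the degree-two cover $\Sigma_2 \to S^2$ branched over six points, and Birman--Hilden theory yields a surjection $\Mod(\Sigma_2) \twoheadrightarrow \Mod(\Sigma_{0,6})$ with kernel $\langle \iota \rangle$. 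As $\Sigma_{0,6}$ is a genus-zero surface already covered by the previous step, the inheritance property shows $\Mod(\Sigma_2)$ involves every finite group, completing the proof. I expect the genus-two step to be the main obstacle: it is the only case reachable neither from a higher-genus closed surface by forgetting punctures nor from virtual freeness, and it is precisely the hyperelliptic surjection onto the six-punctured sphere that rescues it.
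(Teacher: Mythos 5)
Your proof is correct and is essentially the paper's own argument: the same case decomposition (strip punctures via forgetful/filling maps, handle $g=0,1$ by virtual freeness via $F_2 \leq \Mod(\Sigma_{0,4})$ and $\Mod(\Sigma_1)\cong\SL(2,\Z)$, handle $g=2$ by the Birman--Hilden surjection onto $\Mod(\Sigma_{0,6})$, and handle $g\geq 3$ by Theorem~\ref{thm:main1} together with Cayley's theorem). The only caveat is that forgetting a single puncture is well defined only on the finite-index subgroup of mapping classes fixing that puncture---the paper instead works with the kernel $\Mod'(\Sigma_{g,m})$ of the action on the punctures---but this is harmless, since involvement is inherited from finite-index subgroups.
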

\noindent The few mapping class groups not covered by the corollary are finite groups; see, e.g.~\cite{MR2850125}. Corollary~\ref{cor:involve} is also proved using arithmetic methods by F.~Grunewald, M.~Larsen, A.~Lubotzky, and J.~Malestein \cite{MR3426060}.

Further applications of the quotients from Theorem~\ref{thm:main1} include new proofs of residual finiteness and separability of handlebody groups; see \S \ref{sec:handlebodysubgroups} for theorem statements and proofs.

\smallskip

\noindent
{\bf Acknowledgements.} The authors would like to thank Alan Reid and Alex Lubotzky for helpful conversations and Benson Farb for suggesting improvements on an earlier draft.

\section{Preliminaries}

\subsection{$G$-defining subgroups}

Here we collect some results surrounding definitions and discussions in R.~Gilman \cite{MR0435226}.
Let $G$ and $F$ be groups.
A \emph{$G$-defining subgroup of $F$} is a normal subgroup $N$ of $F$ such that $F/N$ is isomorphic to $G$.  Let $X(F,G)$ denote the set of all $G$--defining subgroups of $F$.
The automorphism group $\Aut(F)$ acts on normal subgroups of $F$ while preserving their quotients, and hence on the set $X(F,G)$ of $G$-defining subgroups of $F$.  Since inner automorphisms act trivially, the action descends to an action of the outer automorphism group of $F$, $\Out(F)$, on $X(F,G)$.
If $G$ is finite, and $F$ is finitely generated, one obtains a finite permutation representation of $\Out(F)$.
Let $F_n$ be the free group of rank $n$.  The following is Theorem 1 of \cite{MR0435226}.

\begin{theorem}[Gilman] \label{GilmanTheorem}
For any $n \geq 3$ and prime $p \geq 5$, $\Out(F_n)$ acts on the $\PSL(2,p)$-defining subgroups of $F_n$ as the alternating or symmetric group, and both cases occur for infinitely many primes.
\end{theorem}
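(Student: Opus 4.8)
The plan is to recast the action in terms of generating tuples and then apply classical recognition criteria for alternating and symmetric groups. Fix a free basis $x_1,\dots,x_n$ of $F_n$; this identifies a homomorphism $F_n \to \PSL(2,p)$ with the tuple $(g_1,\dots,g_n)$ of images, which is an epimorphism exactly when the $g_i$ generate. Sending an epimorphism to its kernel gives a bijection between $X(F_n,\PSL(2,p))$ and the set of generating $n$-tuples of $\PSL(2,p)$ modulo the postcomposition action of $\Aut(\PSL(2,p)) = \mathrm{PGL}(2,p)$ (using $p \ge 5$ so that $\PSL(2,p)$ is simple and its automorphism group is $\mathrm{PGL}(2,p)$); because these maps are surjective, the $\mathrm{PGL}(2,p)$-action on epimorphisms is free, so $|X|$ equals the number of generating tuples divided by $|\mathrm{PGL}(2,p)|$, where I abbreviate $X = X(F_n,\PSL(2,p))$. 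Under this dictionary the $\Out(F_n)$-action becomes the action by Nielsen transformations of the basis, and since $X$ is finite the action factors through a finite permutation group $P \le \Sym(X)$. The theorem is the assertion that $P = A_{|X|}$ or $P = S_{|X|}$.

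To identify $P$ I would invoke Jordan's theorem: a primitive subgroup of $\Sym(X)$ containing a $3$-cycle contains $A_{|X|}$, and one containing a transposition equals $S_{|X|}$. The argument thus splits into (i) proving $P$ is primitive and (ii) exhibiting an element of $\Out(F_n)$ whose image is a single short cycle (or, more robustly, via the Jordan--Marggraff extension, a prime-length cycle fixing at least three points). For (ii) the natural candidates are the elementary Nielsen moves, for instance $(g_1,g_2,g_3,\dots)\mapsto(g_1g_2,g_2,g_3,\dots)$: such a move alters only one coordinate, so its cycle structure on generating tuples is governed by the order of the relevant element of $\PSL(2,p)$, and by choosing a tuple in which the remaining coordinates are rigid one aims to isolate an orbit of length exactly two or three after passing to the quotient by $\mathrm{PGL}(2,p)$. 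Here the hypotheses $p \ge 5$ and $n \ge 3$ enter: simplicity of $\PSL(2,p)$ and the redundancy coming from $n$ exceeding the generating rank $2$ of $\PSL(2,p)$ are what make such controlled orbits available.

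The alternating-versus-symmetric dichotomy, and the claim that each occurs for infinitely many $p$, I would settle by a sign computation. Once primitivity and a $3$-cycle give $P \supseteq A_{|X|}$, we have $P = S_{|X|}$ precisely when some generator of $\Out(F_n)$ acts as an odd permutation, and the sign of a permutation of $X$ is $(-1)^{|X|-c}$ with $c$ its number of cycles. Both $|X|$ and the relevant cycle counts are explicit functions of $p$: the number of generating $n$-tuples of $\PSL(2,p)$ is computed by Möbius inversion over the subgroup lattice from $|\PSL(2,p)|^n$, while the orbit lengths are controlled by element orders in $\PSL(2,p)$, which in turn are determined by whether eigenvalues lie in $\F_p$ or $\F_{p^2}$, i.e.\ by quadratic residues modulo $p$. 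I would extract from these formulas a congruence condition on $p$ that forces all generators to act evenly on one infinite family of primes and some generator to act oddly on another, yielding both conclusions.

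The crux, and the step I expect to be hardest, is primitivity in (i). One must rule out every nontrivial $P$-invariant partition of $X$; since $P$ contains the full Nielsen action on generating tuples modulo $\mathrm{PGL}(2,p)$, this reduces to a purely group-theoretic statement about how elementary transformations mix the generating tuples of $\PSL(2,p)$, with the simplicity of the group and the inequality $n \ge 3$ supplying the mixing (and subsuming the prior task of establishing transitivity). Interlaced with this is the quantitative delicacy of (ii): verifying that a chosen Nielsen move descends to an honest transposition or $3$-cycle, rather than a longer cycle or a product of several, requires pinning down orbit lengths exactly and is where the arithmetic of $p$ must be handled with care.
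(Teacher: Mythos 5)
First, a point of order: the paper you are working against does not prove this statement at all --- it is imported verbatim as Theorem~1 of Gilman's paper \cite{MR0435226}, so there is no internal proof to compare with; the comparison must be with Gilman's published argument. Your framework is in fact the right one, and is essentially his: identify $X(F_n,\PSL(2,p))$ with generating $n$-tuples of $\PSL(2,p)$ modulo postcomposition by $\Aut(\PSL(2,p)) \cong \mathrm{PGL}(2,p)$ (your observation that this action on epimorphisms is free, hence gives a clean count, is correct), let $\Out(F_n)$ act by Nielsen transformations, and aim at Jordan's recognition theorems for primitive groups containing a transposition, a $3$-cycle, or a prime-length cycle with enough fixed points.

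However, what you have written is a plan rather than a proof, and the gap is exactly where you locate it --- only it is wider than a single ``hardest step.'' All three pillars are deferred: (i) transitivity and primitivity of the action on $X(F_n,\PSL(2,p))$; (ii) the existence of an element of $\Out(F_n)$ whose image is an honest short cycle; and (iii) the parity or congruence analysis showing that the symmetric and alternating cases each occur for infinitely many primes. None of these follows formally from the setup. Item (i) needs genuine input about Nielsen equivalence of generating tuples of $\PSL(2,p)$ when $n \geq 3$. Item (ii) is delicate for the reason you yourself flag: a Nielsen move such as $(g_1,g_2,\dots) \mapsto (g_1g_2,g_2,\dots)$ has cycle lengths on tuples governed by the order of $g_2$, but after passing to $\mathrm{PGL}(2,p)$-classes these lengths can collapse, so the cycle type downstairs is not read off from an element order; isolating an exact transposition or $3$-cycle requires controlling the stabilizers of tuples, which you do not do. Item (iii) requires explicit counts (M\"obius inversion over the subgroup lattice of $\PSL(2,p)$, the trace parametrization of generating pairs, quadratic-residue conditions on $p$) that are gestured at but not performed. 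Since the theorem asserts precisely that the image is $A_{|X|}$ or $S_{|X|}$ and that both occur infinitely often, items (i)--(iii) \emph{are} the theorem; Gilman's paper spends essentially its entire length on them, which is a fair measure of what remains missing from your proposal.
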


From the proof, Gilman obtains the following strengthened form of residual finiteness for $\Out(F_n)$.

\begin{corollary} [Gilman] \label{cor:Out(Fn) RF sym-alt} For any $n \geq 3$, the group $\Out(F_n)$ is residually finite alternating and residually finite symmetric via the quotients from Theorem~\ref{GilmanTheorem}.
\end{corollary}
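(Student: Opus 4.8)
The plan is to read the corollary as the assertion that nontrivial elements of $\Out(F_n)$ survive in the permutation representations supplied by Theorem~\ref{GilmanTheorem}. Write $\pi_p \colon \Out(F_n) \to \Sym(X(F_n,\PSL(2,p)))$ for the action on $\PSL(2,p)$-defining subgroups. By that theorem the image of $\pi_p$ is the full alternating group for $p$ ranging over an infinite set $\mathcal A$ of primes and the full symmetric group for $p$ ranging over an infinite set $\mathcal S$. Thus ``residually finite alternating via these quotients'' amounts to $\bigcap_{p \in \mathcal A}\ker \pi_p = 1$, and likewise for $\mathcal S$. I would first reduce both statements to the single claim $(\ast)$: every nontrivial $\alpha \in \Out(F_n)$ acts nontrivially on $X(F_n,\PSL(2,p))$ for all but finitely many primes $p \geq 5$. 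Indeed, $(\ast)$ forces the set of primes detecting $\alpha$ to meet each of the infinite families $\mathcal A$ and $\mathcal S$, yielding the two residual properties simultaneously.

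Next I would reduce $(\ast)$ to a statement about conjugacy in the quotients. Represent $\alpha$ by a non-inner $\phi \in \Aut(F_n)$. Since a conjugacy-class-preserving automorphism of a free group is inner, there is a word $w \in F_n$ whose conjugacy class is moved by $\phi$; one can in fact choose $w$ so that $\phi(w)$ is conjugate to neither $w$ nor $w^{-1}$. If $\alpha$ fixes a defining subgroup $N = \ker f_p$ for a surjection $f_p \colon F_n \to \PSL(2,p)$, then $f_p \circ \phi = \theta_p \circ f_p$ for some $\theta_p \in \Aut(\PSL(2,p))$. As $\Aut(\PSL(2,p)) = \mathrm{PGL}(2,p)$ acts by conjugation, this would force $f_p(\phi(w))$ and $f_p(w)$ to be conjugate in $\mathrm{PGL}(2,p)$. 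Hence it suffices to produce, for cofinitely many $p$, a surjection $f_p$ for which $f_p(w)$ and $f_p(\phi(w))$ are \emph{not} $\mathrm{PGL}(2,p)$-conjugate.

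To build these quotients I would fix a faithful representation $F_n \hookrightarrow \SL(2,\Z)$ (using $F_n \hookrightarrow F_2 \hookrightarrow \SL(2,\Z)$) and reduce modulo $p$; by strong approximation the resulting maps $f_p \colon F_n \to \PSL(2,p)$ are surjective for all large $p$. Since $\mathrm{PGL}(2,p)$-conjugacy of semisimple images is governed by the trace up to sign and squaring, one obtains non-conjugacy for cofinitely many $p$ as soon as the integer traces of $\rho(w)$ and $\rho(\phi(w))$ have distinct squares, the remaining case being arranged by moving $\rho$ within the $\SL(2)$-character variety, where a Horowitz-type separation result furnishes a representation distinguishing the classes of $w$ and $\phi(w)$. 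The main obstacle is precisely this last point: ordinary conjugacy separability of $F_n$ only yields \emph{some} finite quotient separating $w$ from $\phi(w)$, whereas I need separation inside the restricted family of $\PSL(2,p)$-quotients and for \emph{cofinitely} many $p$, so as to land in both the alternating family $\mathcal A$ and the symmetric family $\mathcal S$. Making the character-variety and trace argument uniform over primes, and handling words with unavoidable trace coincidences, is where the real work lies.
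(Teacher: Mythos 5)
Your proposal cannot be checked against a proof in the paper, because the paper contains none: Corollary~\ref{cor:Out(Fn) RF sym-alt} is imported from Gilman's paper with the remark that it follows ``from the proof'' of Theorem~\ref{GilmanTheorem}. Judged on its own, your skeleton is reasonable --- reducing both residual properties to your claim $(\ast)$, converting ``$\alpha$ fixes $\ker f_p$'' into ``$f_p(w)$ and $f_p(\phi(w))$ are $\mathrm{PGL}(2,p)$-conjugate'' via $\Aut(\PSL(2,p)) \cong \mathrm{PGL}(2,p)$, and using strong approximation to get surjectivity mod $p$ are all sound moves. But the final step has a gap that is fatal as written: the ``Horowitz-type separation result'' you invoke does not exist. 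Horowitz's theorem goes in exactly the opposite direction: trace functions on the $\SL_2$-representation variety do \emph{not} separate conjugacy-up-to-inversion classes, and the word $w$ produced by your second step may be precisely a witness of this failure. Concretely, for $n \geq 3$ let $\phi$ invert the generators $a$ and $b$ and fix the remaining ones (non-inner, since it acts nontrivially on homology), and let $w = aba^{-1}b^2$. Then $\phi(w) = a^{-1}b^{-1}ab^{-2}$ is conjugate to neither $w$ nor $w^{-1}$ (as cyclic words, $\phi(w)$ has three $b^{-1}$'s while $w$ has none, and no cyclic rotation of $w^{-1}$ matches $\phi(w)$), so this pair is a legitimate output of your reduction. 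Yet for \emph{every} representation $\rho \colon F_n \to \SL(2,k)$, over every commutative ring $k$, one has $\tr \rho(\phi(w)) = \tr \rho(w)$: indeed $\rho(\phi(w)) = w(\rho(a)^{-1}, \rho(b)^{-1})$, the Fricke coordinates $(\tr A, \tr B, \tr AB)$ are unchanged under $(A,B) \mapsto (A^{-1}, B^{-1})$, and every word trace is an integer polynomial in these coordinates. Since $\mathrm{PGL}(2,p)$-conjugacy of semisimple elements of $\PSL(2,p)$ is detected exactly by trace squared, no choice of representation, no motion in the character variety, and no prime $p$ can ever certify the non-conjugacy you need for this pair.

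What your argument actually requires is a word $w$, chosen \emph{adapted to} $\phi$, for which the two functions $\rho \mapsto \tr \rho(w)$ and $\rho \mapsto \tr \rho(\phi(w))$ are distinct polynomials on $\SL_2^n$; in other words, you need the statement that $\Out(F_n)$ acts faithfully on the $\SL_2$-character variety for $n \geq 3$ (this is genuinely false for $n = 2$: the automorphism above, restricted to $F_2$, acts trivially on the entire character variety). That statement is true, but it is strictly stronger than anything you cite --- the example shows that non-conjugacy up to inversion does \emph{not} imply trace separation --- and it is a theorem requiring its own proof, e.g.\ via Schottky representations together with the fact that $\Out(F_n)$ has no nontrivial finite normal subgroup for $n \geq 3$. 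There is also a smaller, separate gap earlier: the existence of $w$ with $\phi(w)$ conjugate to neither $w$ nor $w^{-1}$ does not follow formally from Grossman's theorem that class-preserving automorphisms of free groups are inner, so that strengthening also needs an argument. In short, the logical frame is fine, but the two lemmas doing the real work are unproved, and the fallback you propose for the harder one rests on a reversed reading of Horowitz.
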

This means that for any $\phi \in \Out(F_n) - \{1\}$, there exist primes $p$ so that the action of $\Out(F_n)$ on $X(F_n,\PSL(2,p))$ is alternating (and also primes $p$ so that the action is symmetric), and $\phi$ acts nontrivially.

We will also need the following well-known fact, obtained from the classical embedding of a free group into $\PSL(2, \Z)$ as a subgroup of finite index, (c.f.~A.~Peluso \cite{MR0199245}).

\begin{lemma} \label{L:F_n is RF} For any $n \geq 2$, any element $\alpha \in F_n - \{1\}$, and all but finitely many primes $p$, there exists a $\PSL(2,p)$--defining subgroup of $F_n$ not containing $\alpha$.
\end{lemma}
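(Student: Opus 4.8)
The plan is to exploit the classical fact that $F_n$ embeds as a finite-index subgroup of $\PSL(2,\Z)$, and transport the residual finiteness of $F_n$ into a statement about $\PSL(2,p)$-quotients via reduction modulo $p$. Concretely, I would first fix an embedding $\iota : F_n \hookrightarrow \PSL(2,\Z)$ as a subgroup of finite index, so that $\iota(\alpha)$ is a nontrivial element of $\PSL(2,\Z)$ for the given $\alpha \neq 1$. Since $\PSL(2,\Z)$ is residually finite (indeed, the reduction maps $\rho_p : \PSL(2,\Z) \to \PSL(2,p)$ separate points for suitable $p$), I would then argue that for all but finitely many primes $p$ the composite $\rho_p \circ \iota : F_n \to \PSL(2,p)$ does not kill $\alpha$.

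The key steps, in order, are as follows. First I would lift $\iota(\alpha)$ to a matrix $A \in \SL(2,\Z)$ representing the nontrivial class; since $A \neq \pm I$, at least one of the off-diagonal entries, or the difference of diagonal entries, is a nonzero integer. Second, I would observe that $\rho_p(\iota(\alpha)) = 1$ in $\PSL(2,p)$ precisely when $A \equiv \pm I \pmod p$, which forces $p$ to divide each of these fixed nonzero integers attached to $A$. There are only finitely many such primes, so for all remaining $p$ we have $\rho_p(\iota(\alpha)) \neq 1$. Third, I would note that $\SL(2,\Z) \to \SL(2,p)$, and hence $\PSL(2,\Z) \to \PSL(2,p)$, is surjective for every prime $p$, so $\rho_p \circ \iota$ maps $F_n$ onto a subgroup $H$ of $\PSL(2,p)$. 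Finally, by a theorem of Dickson on subgroups of $\PSL(2,p)$, for suitable $p$ the image $H$ is all of $\PSL(2,p)$; then $N = \ker(\rho_p \circ \iota)$ is a $\PSL(2,p)$-defining subgroup of $F_n$ with $\alpha \notin N$, as desired.

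The main obstacle is the surjectivity in the third and fourth steps: merely avoiding $\alpha$ in the kernel is easy, but producing a subgroup whose quotient is \emph{isomorphic to $\PSL(2,p)$} requires controlling the image $H$. A finite-index free subgroup of $\PSL(2,\Z)$ need not surject $\PSL(2,p)$ under every reduction, so I would handle this by choosing the embedding and invoking the structure of subgroups of $\PSL(2,p)$: proper subgroups are cyclic, dihedral, $A_4$, $S_4$, $A_5$, or lie in a Borel subgroup, all of bounded or very restricted order. Because $F_n$ is nonabelian and maps onto images of growing order as $p$ increases, for all but finitely many $p$ the image cannot be contained in any proper subgroup, forcing $H = \PSL(2,p)$. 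Assembling these observations and discarding the finitely many bad primes (those dividing the relevant entries of $A$, and those for which the image fails to be all of $\PSL(2,p)$) yields the lemma.
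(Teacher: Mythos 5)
Your first half coincides with the paper's proof: embed $F_n$ with finite index in $\PSL(2,\Z)$, lift $\alpha$ to a matrix $A \neq \pm I$, and observe that $\rho_p(\iota(\alpha)) = 1$ forces $p$ to divide the entries of $A - I$ or of $A + I$, which excludes only finitely many primes. The genuine gap is in your last step, where you argue that since the image $H = \rho_p(\iota(F_n))$ is nonabelian and of growing order, Dickson's classification forces $H = \PSL(2,p)$ for all but finitely many $p$. That inference fails: Dickson's list contains proper subgroups that are nonabelian and of \emph{unbounded} order, namely the dihedral subgroups (order $p \pm 1$) and the subgroups of a Borel (order up to $p(p-1)/2$). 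These are all metabelian, and a nonabelian quotient of $F_n$ of arbitrarily large order can perfectly well be metabelian, so ``nonabelian of growing order'' only rules out the cyclic subgroups and $A_4$, $S_4$, $A_5$. As written, the surjectivity step does not go through; your remark that you would ``choose the embedding'' suitably is not cashed out into any property of the image that excludes the Borel and dihedral cases.

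The gap is fixable within your framework in two ways, each using something you stated but never exploited. (i) Index bound: since $\rho_p$ is surjective and $[\PSL(2,\Z) : \iota(F_n)] = N < \infty$, the image $H$ has index at most $N$ in $\PSL(2,p)$; every proper subgroup of $\PSL(2,p)$ has index at least $p+1$ when $p \geq 13$ (this is the real content of Dickson/Galois here), so $H = \PSL(2,p)$ once $p > \max(N,11)$. (ii) Witness argument: the unbounded-order entries of Dickson's list are all metabelian, so fix $w \neq 1$ in the second derived subgroup of $F_n$ and run your separation step on $w$ as well as on $\alpha$; for all but finitely many $p$ the image of $w$ is nontrivial, so $H$ is not metabelian, and once also $|H| > 60$ it lies in no proper subgroup. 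For comparison, the paper avoids Dickson entirely by a specific choice of embedding: it takes $F_n$ of finite index in $F_2 = \left<a,b\right>$, with $a,b$ mapped to the standard unipotents $\left(\begin{smallmatrix} 1 & 2 \\ 0 & 1 \end{smallmatrix}\right)$, $\left(\begin{smallmatrix} 1 & 0 \\ 2 & 1 \end{smallmatrix}\right)$; finite index gives an $m \geq 1$ with $a^m, b^m \in F_n$, and the mod-$p$ images of $\left(\begin{smallmatrix} 1 & m \\ 0 & 1 \end{smallmatrix}\right)$ and $\left(\begin{smallmatrix} 1 & 0 \\ m & 1 \end{smallmatrix}\right)$ already generate $\PSL(2,p)$ whenever $p \nmid m$. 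That is exactly the elementary surjectivity mechanism your proposal is missing.
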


\begin{proof}
Let $F_n$ be a finite index, free subgroup of rank $n$ in the free group $F_2 := \left< a, b \right>$.
Identify $F_n$ with its image in $\PSL(2, \Z)$ under the injective homomorphism $F_2 \to \PSL(2,\Z)$ given by
$$
a \mapsto \begin{pmatrix} 1 & 2 \\ 0 & 1 \end{pmatrix}, \text{ and } \;\;
b \mapsto \begin{pmatrix} 1 & 0 \\ 2 & 1 \end{pmatrix}.
$$ 
Let $\alpha \in F_n - \{1 \}$ be given and let $A \in \SL(2,\Z)$ be a matrix representing $\alpha$.  Since $\alpha \neq 1$, we may assume that either $A$ has a nonzero off-diagonal entry $d \neq 0$, or else a diagonal entry $d > 1$.
Then for any prime $p$ not dividing $d$ in the former case, or $d \pm 1$ in the latter, we have that $\pi_p(\alpha)$ is nontrivial in the quotient $\pi_p : \PSL(2, \Z) \to \PSL(2,p)$; that is, $\alpha \notin \ker \pi_p$.

Since $F_n$ has finite index in $F_2$, there exists $m \geq 1$ so that the matrices
$$
\begin{pmatrix} 1 & m \\ 0 & 1 \end{pmatrix} \text{ and } \begin{pmatrix} 1 & 0 \\ m & 1 \end{pmatrix}
$$
represent elements of $F_n$ in $\PSL(2,\Z)$.   For any prime $p$ not dividing $m$, the $\pi_p$--image of these elements generate $\PSL(2,p)$.  Thus, for all but finitely many primes $p$, $\ker \pi_p \cap F_n$ is a $\PSL(2,p)$--defining subgroup not containing $\alpha$.
\end{proof}

\subsection{Handlebody subgroups and maps to free groups}  \label{S:handlebody free}

Let $\Sigma = \Sigma_g$ be a closed surface of genus $g \geq 2$ and let $H = H_g$ be a handlebody of genus $g$.  Given a homeomorphism $\phi \colon \Sigma \to \partial H \subset H$, the induced homomorphism is a surjection $\phi_* \colon \pi_1(\Sigma) \to \pi_1(H) \cong F_g$.   As is well-known, every epimorphism $\pi_1(\Sigma) \to F_g$ arises in this way (see e.g.~Lemma 2.2 in \cite{MR1885215}).  The kernel, $\Delta_\phi = \ker(\phi_*)$ is an $F_g$--defining subgroup, and is the subgroup generated by the simple closed curves on $\Sigma$ whose $\phi$--images bound disks in $H$.  We write $H_\phi$ for the handlebody $H$, equipped with the homeomorphism $\phi \colon \Sigma \to \partial H$.

Let $\Mod(H_\phi)$ denote the subgroup of the mapping class group $\Mod(\Sigma)$ consisting of the isotopy classes of homeomorphisms that extend over $H_\phi$ (via the identification $\phi \colon \Sigma \to \partial H$).   Equivalently, $\Mod(H_\phi)$ consists of those mapping classes $[f]$ such that $f_*(\Delta_\phi) = \Delta_\phi$; that is $\Mod(H_\phi)$ is the stabilizer in $\Mod(\Sigma)$ of $\Delta_\phi$.  Any element $[f] \in \Mod(H_\phi)$ induces an automorphism we denote $\Phi_*([f]) \in \Out(F_g)$, which defines a homomorphism $\Phi_* \colon \Mod(H_\phi) \to \Out(F_g)$.  The main result of \cite{MR0159313} implies the next proposition.

\begin{proposition} \label{prop:surj}
For any $g \geq 0$, and homeomorphism $\phi \colon \Sigma \to \partial H$,  $\Phi_* : \Mod(H_\phi) \to \Out(F_g)$ is surjective.
\end{proposition}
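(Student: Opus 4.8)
The plan is to deduce surjectivity directly from the main result of \cite{MR0159313}, which realizes every automorphism of $\pi_1(H) \cong F_g$ by a self-homeomorphism of the handlebody $H$. Granting this realization, the proposition reduces to transporting such a homeomorphism from $H$ to the boundary surface $\Sigma$ through the marking $\phi$, and then checking that the resulting mapping class lies in $\Mod(H_\phi)$ and induces the prescribed outer automorphism.

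Concretely, I would begin with an arbitrary $\psi \in \Out(F_g)$ and choose a representative $\tilde\psi \in \Aut(\pi_1(H))$ under the identification $\pi_1(H) \cong F_g$. The cited result then supplies a homeomorphism $F \colon H \to H$ whose action on $\pi_1(H)$ agrees with $\tilde\psi$ up to an inner automorphism, the inner ambiguity arising from the choice of path joining the basepoint to its image under $F$. Since $F$ is a homeomorphism of the handlebody, it restricts to a homeomorphism $F|_{\partial H} \colon \partial H \to \partial H$ of the boundary, and conjugating by the marking produces a homeomorphism $f := \phi^{-1} \circ (F|_{\partial H}) \circ \phi \colon \Sigma \to \Sigma$.

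It then remains to verify the two defining properties. First, $F$ extends $f$ over $H_\phi$ by construction, since the identification $\phi \colon \Sigma \to \partial H$ intertwines $f$ with $F|_{\partial H}$; hence $[f] \in \Mod(H_\phi)$, or equivalently $f_*$ preserves $\Delta_\phi = \ker(\phi_*)$. Second, I would trace through the definition of $\Phi_*$: the automorphism $f_*$ of $\pi_1(\Sigma)$ descends to an automorphism of $\pi_1(\Sigma)/\Delta_\phi \cong \pi_1(H)$, and under the marking this descended map is precisely the one induced by $F_*$. As $F_*$ represents $\psi$ in $\Out(F_g)$, the outer class of the descended automorphism is $\psi$, so $\Phi_*([f]) = \psi$. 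Since $\psi$ was arbitrary, $\Phi_*$ is surjective.

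The substantive content is carried entirely by the realization theorem of \cite{MR0159313}; the remainder is bookkeeping, and the main obstacle would vanish the moment that result is invoked. The only points requiring care are that the inner-automorphism ambiguity in the realization is harmless because we record the induced map in $\Out(F_g)$ rather than in $\Aut(F_g)$, and that the realizing $F$ may be taken orientation-preserving so that $f \in \Mod(\Sigma)$ (for instance, an automorphism acting by $-1$ on a homology generator is realized by reversing the corresponding core circle while reflecting the transverse disk, which is orientation-preserving on $H$). The low-genus cases are degenerate and can be checked directly: $\Out(F_0)$ is trivial, while $\Out(F_1) \cong \Z/2$ is realized by the orientation-preserving involution $(\theta, z) \mapsto (-\theta, \bar z)$ of the solid torus $S^1 \times D^2$, which inverts its core.
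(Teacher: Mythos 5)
Your proof is correct and takes essentially the same route as the paper: the paper offers no written-out argument at all, simply stating that the main result of \cite{MR0159313} (the realization of automorphisms of $F_g$ by handlebody homeomorphisms) implies the proposition. Your write-up supplies exactly the bookkeeping the paper leaves implicit --- transporting the realizing homeomorphism through the marking $\phi$, noting the inner ambiguity is absorbed by working in $\Out(F_g)$, and handling orientation and the degenerate low-genus cases.
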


The kernel of $\Phi_*$, the set of mapping classes in $\Mod(H_\phi)$ that act trivially on $\pi_1(H)$ is also a well-studied subgroup denoted $\Mod_0(H_\phi)$. 

Recall that $X(F_g,G)$ and $X(\pi_1(\Sigma),G)$ are the sets of $G$--defining subgroups of $F_g$ and $\pi_1(\Sigma)$, respectively.   Define
\[ X^\phi(\pi_1(\Sigma),G) :=  \{ \phi_*^{-1}(N) \mid N \in X(F_g,G)\} \subset X(\pi_1(\Sigma),G), \]
Alternatively, this is precisely the set of $G$--defining subgroups containing $\Delta_\phi$:
\[ X^\phi(\pi_1(\Sigma),G) = \{ N \in X(\pi_1(\Sigma),G) \mid \Delta_\phi < N \}.\]

\begin{lemma} \label{L:Mod(H) stabilizer} The handlebody subgroup is contained in the stabilizer
\[ \Mod(H_\phi) < \stab X^\phi(\pi_1(\Sigma),G).\]
Moreover, if $\Out(F_g)$ acts on $X(F_g,G)$ as the full symmetric group, then $\Mod(H_\phi)$ (and hence $\stab X^\phi(\pi_1(\Sigma),G)$) acts on $X^\phi(\pi_1(\Sigma),G)$ as the full symmetric group.
\end{lemma}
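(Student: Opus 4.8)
The plan is to establish the two assertions of the lemma by exploiting the bijection induced by $\phi_*$ between $X(F_g,G)$ and $X^\phi(\pi_1(\Sigma),G)$, and then tracking how the two group actions intertwine under this bijection. First I would observe that pullback along the surjection $\phi_* \colon \pi_1(\Sigma) \to F_g$ gives a bijection $N \mapsto \phi_*^{-1}(N)$ from $X(F_g,G)$ onto $X^\phi(\pi_1(\Sigma),G)$; indeed, since $\phi_*$ is surjective, $\pi_1(\Sigma)/\phi_*^{-1}(N) \cong F_g/N \cong G$, so the preimage of a $G$-defining subgroup is a $G$-defining subgroup containing $\Delta_\phi = \ker \phi_*$, and the correspondence theorem makes this assignment a bijection with inverse $M \mapsto \phi_*(M) = M/\Delta_\phi$.

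Next I would prove the containment $\Mod(H_\phi) < \stab X^\phi(\pi_1(\Sigma),G)$. Recall that $\Mod(H_\phi)$ is precisely the stabilizer in $\Mod(\Sigma)$ of $\Delta_\phi$, so for $[f] \in \Mod(H_\phi)$ the induced automorphism $f_*$ of $\pi_1(\Sigma)$ satisfies $f_*(\Delta_\phi) = \Delta_\phi$. Given any $M \in X^\phi(\pi_1(\Sigma),G)$, the image $f_*(M)$ is again normal with $\pi_1(\Sigma)/f_*(M) \cong \pi_1(\Sigma)/M \cong G$, and it still contains $f_*(\Delta_\phi) = \Delta_\phi$, so $f_*(M) \in X^\phi(\pi_1(\Sigma),G)$. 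This shows $\Mod(H_\phi)$ preserves the set $X^\phi(\pi_1(\Sigma),G)$, giving the desired containment.

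For the second assertion, the key step is to verify that the bijection above is \emph{equivariant} for the two actions via the homomorphism $\Phi_* \colon \Mod(H_\phi) \to \Out(F_g)$. Concretely, for $[f] \in \Mod(H_\phi)$ and $N \in X(F_g,G)$, I would check the identity $f_*(\phi_*^{-1}(N)) = \phi_*^{-1}(\Phi_*([f]) \cdot N)$. This follows by chasing the definition of $\Phi_*$: the induced automorphism on $F_g \cong \pi_1(H)$ is obtained from $f_*$ by passing to the quotient $\phi_*$, so $\phi_* \circ f_* = \Phi_*([f]) \circ \phi_*$ up to inner automorphisms of $F_g$ (which act trivially on $X(F_g,G)$), and pulling back along $\phi_*$ converts the $\Out(F_g)$-action on $X(F_g,G)$ into the $\Mod(H_\phi)$-action on $X^\phi(\pi_1(\Sigma),G)$. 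With equivariance in hand, the conclusion is immediate: by Proposition~\ref{prop:surj}, $\Phi_*$ is surjective, so the image of $\Mod(H_\phi)$ in $\Sym(X(F_g,G))$ equals the image of $\Out(F_g)$; if the latter is the full symmetric group, then transporting through the equivariant bijection shows $\Mod(H_\phi)$ acts as the full symmetric group on $X^\phi(\pi_1(\Sigma),G)$, and since $\Mod(H_\phi) < \stab X^\phi(\pi_1(\Sigma),G)$, the stabilizer does as well.

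I expect the main obstacle to be pinning down the equivariance identity cleanly, in particular reconciling the fact that $\Phi_*([f])$ lives in $\Out(F_g)$ rather than $\Aut(F_g)$: the map $\phi_* \circ f_*$ and $\Phi_*([f]) \circ \phi_*$ agree only after composing with an inner automorphism of $F_g$ arising from the basepoint ambiguity, so I must invoke the fact (already used to define the $\Out(F_g)$-action) that inner automorphisms fix every $G$-defining subgroup in order to conclude that the two pullbacks coincide on the level of subgroups. Once this point is handled carefully, the rest of the argument is a formal transport of structure along the bijection.
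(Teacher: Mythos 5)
Your proposal is correct and follows essentially the same route as the paper's proof: the containment is established exactly as in the paper from $f_*(\Delta_\phi) = \Delta_\phi$, and the symmetric-group statement is deduced from Proposition~\ref{prop:surj} together with the $\Phi_*$--equivariance of the correspondence-theorem bijection $X^\phi(\pi_1(\Sigma),G) \to X(F_g,G)$. The only difference is one of detail: you spell out the equivariance identity and the role of inner automorphisms explicitly, where the paper asserts it follows immediately.
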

\begin{proof}
Let $N \in X^\phi(\pi_1(\Sigma),G)$ and let $[f] \in \Mod(H_\phi)$, where $f$ is a representative homeomorphism.  Since $f_*(\Delta_\phi) = \Delta_\phi$, we have $\Delta_\phi < f_*(N)$, and $f_*(N) \in X^\phi(\pi_1(\Sigma),G)$.  Thus, $f_*$ preserves $X^\phi(\pi_1(\Sigma),G)$, as required.

The last statement follows immediately from Proposition~\ref{prop:surj} and the fact that the bijection from the correspondence theorem $X^\phi(\pi_1(\Sigma),G) \to X(F_g,G)$ is $\Phi_*$--equivariant.
\end{proof}
\section{Mapping class groups involve any finite group: The proofs of Theorem~\ref{thm:main1} and Corollary~\ref{cor:involve}.}

Here we give the proof of Theorem~\ref{thm:main1}, following Dunfield--Thurston (see \cite[pp. 505-506]{MR2257389}).

\begin{proof}[Proof of Theorem~\ref{thm:main1}.]
Fix $g \geq 3$ and let $\Pi$ be the infinitely many primes for which $\Out(F_g)$ acts on the $\PSL(2,p)$-defining subgroups as the symmetric group, guaranteed by Theorem~\ref{GilmanTheorem}.
As a consequence of Corollary \ref{cor:Out(Fn) RF sym-alt}, the cardinality of $X(F_g, \PSL(2,p))$ is unbounded over any infinite set of primes $p$, and hence there exists a prime $p \in \Pi$ where the number of  $\PSL(2,p)$-defining subgroups is $R \geq r$.

By Lemma \ref{L:Mod(H) stabilizer}, $\stab X^\phi(\pi_1(\Sigma),\PSL(2,p))$ acts on $X^\phi(\pi_1(\Sigma),\PSL(2,p))$ as the symmetric group, defining a surjective homomorphism
\[ \stab X^\phi(\pi_1(\Sigma),\PSL(2,p)) \to \Sym(X^\phi(\pi_1(\Sigma),\PSL(2,p))) \cong S_R.\]
Since $X(\pi_1(\Sigma),\PSL(2,p))$ is a finite set, $\stab X^\phi(\pi_1(\Sigma),\PSL(2,p)) < \Mod(\Sigma)$ has finite index, completing the proof.
\end{proof}

\begin{proof}[Proof of Corollary~\ref{cor:involve}]
For $g,m$ as in the statement and any $r \in \N$, we show that there is a finite index subgroup of $\Mod(\Sigma_{g,m})$ that surjects a symmetric group on at least $r$ elements.  Since any finite subgroup is isomorphic to a subgroup of some such symmetric group, this will prove the theorem.

First observe that for any $m,g \geq 0$, the kernel of the action of $\Mod(\Sigma_{g,m})$ on the $m$ punctures of $\Sigma_{g,m}$ is a finite index subgroup $\Mod'(\Sigma_{g,m}) < \Mod(\Sigma_{g,m})$.  Furthermore, if $0 \leq m < m'$, there is a surjective homomorphism $\Mod'(\Sigma_{g,m'}) \to \Mod'(\Sigma_{g,m})$ obtained by ``filling in'' $m'-m$ of the punctures; see \cite{MR2850125}.

Now, because $\Mod'(\Sigma_{0,4}) \cong F_2$, and the symmetric group on $r$ elements is 2--generated, it follows that $\Mod'(\Sigma_{0,4})$ surjects $S_r$.  From the previous paragraph, it follows that $\Mod'(\Sigma_{0,m})$ surjects $S_r$ for all $m \geq 4$.  Similarly, $\Mod(\Sigma_{1,0}) \cong \SL(2,\Z)$, which has a finite index subgroup isomorphic to $F_2$, and so there is a finite index subgroup of $\Mod(\Sigma_{1,m})$ that surjects $S_r$ for all $m \geq 0$.
As shown in \cite{MR0292082}, there is a surjective homomorphism $\Mod(\Sigma_{2,0}) \to \Mod(\Sigma_{0,6})$, and consequently, we may find surjective homomorphisms from finite index subgroups of $\Mod(\Sigma_{2,m})$ to $S_r$ for all $m \geq 0$.  From this and the previous paragraph, it suffices to assume $g \geq 3$ and $m  = 0$.  The required surjective homomorphism to a symmetric group in this case follows from Theorem~\ref{thm:main1}, completing the proof.
\end{proof}

\section{Separating handlebody subgroups and residual finiteness}
\label{sec:handlebodysubgroups}

The finite quotients of $\Mod(\Sigma)$ coming from surjective homomorphisms $\pi_1(\Sigma_g) \to F_g$ also allow us to deduce the following result of \cite{MR2271769}.  Recall that a subgroup $K < F$ is said to be separable in $F$ if for any $\alpha \in F - K$, there exists a finite index subgroup $G < F$ containing $K$ and not containing $\alpha$.
\begin{theorem} [Leininger-McReynolds] \label{thm:sep handlebody subgroups} For any $g \geq 2$ and homeomorphism to the boundary of a handlebody, $\phi \colon \Sigma \to \partial H$, the groups $\Mod(H_\phi)$ and $\Mod_0(H_\phi)$ are separable in $\Mod(\Sigma_g)$.
\end{theorem}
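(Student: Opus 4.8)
The plan is to separate both subgroups inside $\Mod(\Sigma)$ using the finite permutation representations $\rho_p \colon \Mod(\Sigma) \to \Sym(X(\pi_1(\Sigma), \PSL(2,p)))$ arising from the action on $\PSL(2,p)$--defining subgroups, the same actions that underlie Theorem~\ref{thm:main1}. The two subgroups are detected by \emph{how} they act on the distinguished finite subset $X^\phi(\pi_1(\Sigma), \PSL(2,p))$: by Lemma~\ref{L:Mod(H) stabilizer} the handlebody group $\Mod(H_\phi)$ preserves this subset \emph{setwise}, whereas $\Mod_0(H_\phi)$, being the kernel of $\Phi_*$, acts through the identity of $\Out(F_g)$ and hence fixes this subset \emph{pointwise}. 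Accordingly I would separate $\Mod(H_\phi)$ using setwise stabilizers and separate $\Mod_0(H_\phi)$ using pointwise stabilizers, in each case noting that these stabilizers are finite-index since $X(\pi_1(\Sigma), \PSL(2,p))$ is finite.

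For $\Mod(H_\phi)$, take $\alpha \in \Mod(\Sigma) - \Mod(H_\phi)$, so that $\alpha_*(\Delta_\phi) \neq \Delta_\phi$. First I would observe, using that $F_g$ is Hopfian, that neither of $\Delta_\phi$, $\alpha_*(\Delta_\phi)$ can properly contain the other, since a proper containment would induce a non-injective surjection $F_g \to F_g$ on the quotients; hence we may choose $x \in \alpha_*(\Delta_\phi) - \Delta_\phi$. Then $\phi_*(x) \neq 1$ in $F_g$, and Lemma~\ref{L:F_n is RF} provides, for all but finitely many $p$, a $\PSL(2,p)$--defining subgroup $N'$ of $F_g$ with $\phi_*(x) \notin N'$. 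Setting $N = \phi_*^{-1}(N')$ yields a point of $X^\phi(\pi_1(\Sigma), \PSL(2,p))$ with $x \notin N$, so $\alpha_*(\Delta_\phi) \not\subseteq N$ and therefore $N \notin \alpha_*(X^\phi(\pi_1(\Sigma), \PSL(2,p)))$. Thus $\alpha$ does not preserve $X^\phi$ setwise, and the finite-index subgroup $\rho_p^{-1}(\stab_{\mathrm{set}} X^\phi(\pi_1(\Sigma), \PSL(2,p)))$ contains $\Mod(H_\phi)$ but not $\alpha$.

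For $\Mod_0(H_\phi)$, let $L_p < \Mod(\Sigma)$ denote the finite-index subgroup fixing $X^\phi(\pi_1(\Sigma), \PSL(2,p))$ pointwise; by Lemma~\ref{L:Mod(H) stabilizer} together with $\Phi_* = 1$ on $\Mod_0(H_\phi)$, every element of $\Mod_0(H_\phi)$ lies in $L_p$ for every $p$. Given $\alpha \in \Mod(\Sigma) - \Mod_0(H_\phi)$, I would split into two cases. If $\alpha \notin \Mod(H_\phi)$, the subgroup produced in the previous paragraph already contains $\Mod(H_\phi) \supseteq \Mod_0(H_\phi)$ and excludes $\alpha$. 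If instead $\alpha \in \Mod(H_\phi) - \Mod_0(H_\phi)$, then $\Phi_*(\alpha) \neq 1$, and Corollary~\ref{cor:Out(Fn) RF sym-alt} furnishes a prime $p$ for which $\Phi_*(\alpha)$ acts nontrivially on $X(F_g, \PSL(2,p))$; by the $\Phi_*$--equivariance in Lemma~\ref{L:Mod(H) stabilizer}, $\alpha$ then moves some point of $X^\phi(\pi_1(\Sigma), \PSL(2,p))$, so $\alpha \notin L_p$ while $\Mod_0(H_\phi) \subseteq L_p$. Either way we obtain a finite-index subgroup separating $\alpha$ from $\Mod_0(H_\phi)$.

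The main obstacle is the treatment of $\Mod_0(H_\phi)$. Its nontrivial elements lying outside $\Mod(H_\phi)$ do not preserve $X^\phi$ at all, so a single setwise invariant cannot do the whole job and must be replaced by the case split above; moreover, for the remaining elements the nontriviality of $\Phi_*(\alpha) \in \Out(F_g)$ has to be certified through the ambient action of $\Mod(\Sigma)$ rather than through $\Out(F_g)$ in isolation. This is precisely where Gilman's residual symmetric/alternating property (Corollary~\ref{cor:Out(Fn) RF sym-alt}) is indispensable, since it guarantees a prime $p$ at which $\Phi_*(\alpha)$ is visible on $X(F_g, \PSL(2,p))$ and hence on the pointwise action of $\alpha$ on $X^\phi$. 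The $\Mod(H_\phi)$ case, by contrast, reduces routinely to residual $\PSL(2,p)$--ness of free groups via Lemma~\ref{L:F_n is RF} and Hopficity.
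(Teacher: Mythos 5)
Your proposal is correct and follows essentially the same route as the paper: the setwise stabilizer of $X^\phi(\pi_1(\Sigma),\PSL(2,p))$ combined with Lemma~\ref{L:F_n is RF} to separate $\Mod(H_\phi)$, then a reduction to elements of $\Mod(H_\phi)\setminus\Mod_0(H_\phi)$ handled by the pointwise stabilizer and Corollary~\ref{cor:Out(Fn) RF sym-alt} to separate $\Mod_0(H_\phi)$ (your explicit Hopficity argument for choosing $x \in \alpha_*(\Delta_\phi) - \Delta_\phi$ is a step the paper leaves implicit, and is a nice addition). One caveat, which the paper flags but you do not: Corollary~\ref{cor:Out(Fn) RF sym-alt} requires rank at least $3$, so your argument for $\Mod_0(H_\phi)$ only establishes separability for $g \geq 3$, not the full range $g \geq 2$ in the statement.
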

\noindent While the proof of separability of $\Mod(H_\phi)$ in $\Mod(\Sigma_g)$ works for all $g \geq 2$, separability of $\Mod_0(\Sigma_g)$ only follows from the discussion here when $g \geq 3$.
\begin{proof} Suppose $\Sigma = \Sigma_g$ for $g \geq 2$, and observe that for any $p$, any $[h] \in \stab X^\phi(\pi_1(\Sigma),\PSL(2,p))$, and any $\alpha \in \Delta_\phi$, we have $h_*(\alpha) \in K$ for all $K \in X^\phi(\pi_1(\Sigma),\PSL(2,p))$.  By Lemma~\ref{L:Mod(H) stabilizer} this is true for all $[h] \in \Mod(H_\phi)$.  

Now let $[f] \in \Mod(\Sigma) - \Mod(H_\phi)$, so that $f_*(\Delta_\phi) \not < \Delta_\phi$.  Let $\gamma \in \Delta_\phi$ be an element such that (the conjugacy class of) $f_*(\gamma)$ is not in $\Delta_\phi$.  (In fact, well-defining $f_*(\gamma)$ requires a choice of basepoint preserving representative homeomorphisms for the mapping class of $f$, which we make arbitrarily.)   Then $\phi_*(f_*(\gamma)) \in F_g - \{1\}$, and so by Lemma~\ref{L:F_n is RF}, we can find a prime $p$ and a $\PSL(2,p)$--defining subgroup $N \in X(F_g,\PSL(2,p))$ so that $\phi_*(f_*(\gamma)) \not \in N$.  Therefore,
\[ f_*(\gamma) \not \in \phi_*^{-1}(N) \in X^\phi(\pi_1(\Sigma),\PSL(2,p)),\] and hence $[f] \not \in \stab X^\phi(\pi_1(\Sigma),\PSL(2,p))$.  Since $\stab X^\phi(\pi_1(\Sigma),\PSL(2,p))$ is a finite index subgroup containing $\Mod(H_\phi)$ (by Lemma~\ref{L:Mod(H) stabilizer}) and not containing $[f]$, and since $[f]$ was arbitrary, it follows that $\Mod(H_\phi)$ is separable.

Since $\Mod_0(H_\phi) < \Mod(H_\phi)$ and since $\Mod(H_\phi)$ is separable, it suffices to consider an element $[f] \in \Mod(H_\phi) \setminus \Mod_0(H_\phi)$, and produce a finite index subgroup of $\Mod(\Sigma)$ containing $\Mod_0(H_\phi)$ and not containing $[f]$.  For all $p$, $\Mod_0(H_\phi)$ is contained in the subgroup of $\stab X^\phi(\pi_1(\Sigma),\PSL(2,p))$ consisting of those mapping classes that act trivially on $X^\phi(\pi_1(\Sigma),\PSL(2,p))$.  Since $[f] \not \in \Mod_0(H_\phi)$, $\Phi_*([f]) \neq 1$ in $\Out(F_g)$.  For $g \geq 3$, Corollary~\ref{cor:Out(Fn) RF sym-alt} implies that for some $p$, $\Phi_*([f])$ acts nontrivially on $X(F_g,\PSL(2,p))$.  Therefore, $[f]$ acts nontrivially on $X^\phi (\pi_1(\Sigma),\PSL(2,p))$, and so the finite index subgroup $G < \Mod(\Sigma)$ consisting of those mapping classes preserving the subset $X^\phi(\pi_1(\Sigma),\PSL(2,p))$ and acting trivially on this does not contain $[f]$, proving that $\Mod_0(H_\phi)$ is separable.  
\end{proof}

Mapping class groups were shown to be residually finite by Grossman as a consequence of the fact that surface groups are conjugacy separable; see \cite{MR0405423}.  
\begin{theorem} [Grossman] \label{thm:res finite} Mapping class groups are residually finite.
\end{theorem}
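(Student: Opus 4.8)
The plan is to deduce residual finiteness of $\Mod(\Sigma_g)$ from the separability results already established, by reducing to the separability of the trivial subgroup. Recall that a group $\Gamma$ is residually finite precisely when the trivial subgroup is separable, i.e.\ for every $[f] \in \Gamma - \{1\}$ there is a finite index subgroup not containing $[f]$. So given a nontrivial mapping class $[f] \in \Mod(\Sigma_g)$, I would seek a finite index subgroup avoiding it, built from the $\PSL(2,p)$--quotients of $\pi_1(\Sigma_g)$ used throughout the paper.

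The natural strategy is to separate $[f]$ using the permutation action on $G$--defining subgroups for a well-chosen handlebody structure $\phi$. First I would fix an epimorphism $\phi_* \colon \pi_1(\Sigma_g) \to F_g$ and consider the finite index subgroup $\stab X^\phi(\pi_1(\Sigma),\PSL(2,p)) < \Mod(\Sigma_g)$. There are two cases. If $[f]$ does not stabilize $X^\phi(\pi_1(\Sigma),\PSL(2,p))$ for some choice of $\phi$ and $p$, then this stabilizer is a finite index subgroup missing $[f]$ and we are done, exactly as in the $\Mod(H_\phi)$--separability argument of Theorem~\ref{thm:sep handlebody subgroups}. The remaining, genuinely harder, case is when $[f]$ lies in $\stab X^\phi(\pi_1(\Sigma),\PSL(2,p))$ for every such structure: here I would try to detect $[f]$ by its induced action, either nontrivially permuting the finite set $X^\phi$, or via the map $\Phi_* \colon \Mod(H_\phi) \to \Out(F_g)$ together with Corollary~\ref{cor:Out(Fn) RF sym-alt}, which gives finite alternating/symmetric quotients of $\Out(F_g)$ detecting any nontrivial outer automorphism.

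The main obstacle is the case in which $[f]$ fixes each relevant $G$--defining subgroup and moreover acts trivially through every $\Phi_*$; such an $[f]$ would act trivially on all these finite quotients, so the handlebody machinery alone does not see it. This reflects a real limitation: these constructions only probe $\Mod(\Sigma_g)$ through its action on finite quotients of $\pi_1(\Sigma_g)$, and a mapping class acting trivially on every characteristic finite quotient need not be trivial without additional input. For this reason I expect the honest proof to invoke, rather than reprove, the stronger structural fact underlying residual finiteness: namely Grossman's observation that $\pi_1(\Sigma_g)$ is \emph{conjugacy separable}, so that the conjugacy action of $\Mod(\Sigma_g)$ on the (residually finite) automorphism group $\Aut(\pi_1(\Sigma_g))$, passed to finite quotients, detects every nontrivial mapping class. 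Concretely, I would cite \cite{MR0405423} and sketch that the outer automorphism group of a conjugacy separable, residually finite group with trivial center is itself residually finite, applied to $\pi_1(\Sigma_g)$. Thus the correct structure of the argument is to record that the full strength of residual finiteness follows from conjugacy separability of surface groups, and to note that the $\PSL(2,p)$--quotients produced above already suffice to separate every handlebody structure and every nontrivial element detected by $\Out(F_g)$, leaving only the reference to Grossman to close the general case.
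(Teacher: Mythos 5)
There is a genuine gap, and it is exactly at the point you flagged as the ``main obstacle.'' You correctly set up the two tractable cases (an element not stabilizing some $X^\phi(\pi_1(\Sigma),\PSL(2,p))$, and an element of some $\Mod(H_\phi)$ detected through $\Phi_*$ and Corollary~\ref{cor:Out(Fn) RF sym-alt}), but then you conclude that a mapping class lying in every handlebody subgroup and acting trivially through every $\Phi_*$ cannot be handled by this machinery, and you fall back on citing Grossman's conjugacy separability argument. That move is circular in this context: the statement being proved \emph{is} Grossman's theorem, and the entire point of the paper's Section~\ref{sec:handlebodysubgroups} is to give an independent proof of it from the $\PSL(2,p)$--quotients. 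A proof that ends by invoking \cite{MR0405423} proves nothing new.

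The missing idea is that your ``obstacle case'' is in fact vacuous, and showing this requires one more input that is geometric rather than profinite: Lemma~\ref{lem:trivial handlebody intersection}, proved via Masur's theorem that the limit set of a handlebody subgroup in the Thurston boundary is nowhere dense. Since $\bigcap_\phi \Mod(H_\phi)$ is a normal subgroup, it is either finite or has full limit set; Masur's result forces it to be finite, and the classification of finite normal subgroups of $\Mod(\Sigma_g)$ shows it is trivial for $g \geq 3$ (and generated by the hyperelliptic involution for $g=2$, which is then killed by passing to $\bigcap_\phi \Mod_0(H_\phi)$, since the hyperelliptic involution induces a nontrivial automorphism of $F_2$). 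Consequently, any nontrivial $[f]$ fails to lie in $\Mod_0(H_\phi)$ for \emph{some} handlebody structure $\phi$, and then one of your two tractable cases applies: either $[f] \notin \Mod(H_\phi)$, in which case the argument of Theorem~\ref{thm:sep handlebody subgroups} produces a prime $p$ with $[f] \notin \stab X^\phi(\pi_1(\Sigma),\PSL(2,p))$, or $[f] \in \Mod(H_\phi)$ with $\Phi_*([f]) \neq 1$, in which case Gilman's corollary finishes the job. Your error was to treat the quantifier over $\phi$ as fixed: a single handlebody structure indeed cannot see every mapping class, but varying $\phi$ over all homeomorphisms $\Sigma \to \partial H$, together with Masur's theorem, leaves no nontrivial element undetected. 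No appeal to conjugacy separability is needed.
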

Residual finiteness of $\Mod(\Sigma_g)$ follows immediately from separability of the handlebody subgroups $\Mod(H_\phi)$, and the following.
\begin{lemma} \label{lem:trivial handlebody intersection}
The intersection of all handlebody subgroups $\Mod(H_\phi)$, over all homeomorphisms $\phi \colon \Sigma_g \to \partial H$ is trivial if $g \geq 3$, and isomorphic to $\Z/2\Z$ if $g = 2$.  The intersection of handlebody subgroups $\Mod_0(H_\phi)$ is trivial for all $g \geq 2$. 
\end{lemma}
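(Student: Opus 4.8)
The plan is to show that every $[f]$ in the intersection $N:=\bigcap_\phi\Mod(H_\phi)$ fixes the isotopy class of every nonseparating simple closed curve, and then to finish using the (near-)faithfulness of the action of $\Mod(\Sigma_g)$ on the curve complex. Via the Dehn--Nielsen--Baer theorem I view $[f]$ as an outer automorphism of $\pi_1(\Sigma)$, so that $[f]\in\Mod(H_\phi)$ exactly when $f_*$ preserves $\Delta_\phi$. The geometric input I use repeatedly is that every nonseparating simple closed curve $c$ is a meridian of some handlebody structure $\phi$ on $\Sigma$ (complete $c$ to a cut system); equivalently, there is an epimorphism $\phi_*\colon\pi_1(\Sigma)\to F_g$ with $c\in\Delta_\phi$.

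The heart of the argument is the following separation statement. Fix $[f]\in N$ and a nonseparating $c$, and set $c'=f(c)$. For every handlebody structure $\phi$ in which $c$ is a meridian we have $c\in\Delta_\phi$, and since $f_*$ preserves $\Delta_\phi$ (and $f$, extending over $H_\phi$, carries the disk bounded by $c$ to a disk bounded by $c'$), the curve $c'$ is again a meridian of $H_\phi$. Thus $c'$ is a common meridian of \emph{every} handlebody in which $c$ is a meridian, and I claim this forces $c'=c$. When $[c']\neq\pm[c]$ in $H_1(\Sigma;\Z)$ this is immediate on homology: the abelianizations of the groups $\Delta_\phi$ realize all Lagrangian direct summands of $H_1(\Sigma;\Z)$ (the mapping class group acts transitively on handlebody structures and surjects onto $\mathrm{Sp}(2g,\Z)$, which acts transitively on such summands), so one may choose $\phi$ with $[c]$ in the Lagrangian $\Delta_\phi^{\mathrm{ab}}$ but $[c']$ not, contradicting $c'\in\Delta_\phi$. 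The remaining case, in which $c$ and $c'$ are homologous but not isotopic, is the main obstacle: here I must produce an epimorphism $\pi_1(\Sigma)\to F_g$ killing $c$ but not $c'$ by a genuinely nonabelian argument, either by building the handlebody explicitly through the change-of-coordinates principle or by exploiting residual freeness of $\pi_1(\Sigma)$ to separate the two conjugacy classes in a rank-$g$ free quotient.

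Granting the separation statement, every $[f]\in N$ fixes the isotopy class of each nonseparating simple closed curve, hence acts trivially on the curve complex; the kernel of this action is the center of $\Mod(\Sigma_g)$ (Ivanov). For $g\geq3$ the center is trivial, giving $N=1$. For $g=2$ the center is $\langle\iota\rangle\cong\Z/2\Z$ generated by the hyperelliptic involution; since $\iota$ fixes every isotopy class of simple closed curve it preserves every $\Delta_\phi$ and so lies in $N$, giving $N=\langle\iota\rangle\cong\Z/2\Z$. For the subgroups $\Mod_0(H_\phi)$, note that $N_0:=\bigcap_\phi\Mod_0(H_\phi)\subseteq N$, so $N_0$ is likewise central; and $\iota\notin N_0$ because $\iota$ induces $-I$ on each quotient $\pi_1(\Sigma)/\Delta_\phi\cong F_g$, which is nontrivial in $\Out(F_g)$. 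Hence $N_0=1$ for all $g\geq2$.

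The decisive step is thus the homologically trivial case of the separation claim; everything else is either standard structure theory of $\Mod(\Sigma_g)$ or the easy homological separation. I expect the cleanest route to be the explicit one, extending $c$ to a cut system whose remaining curves are chosen (using the freedom in the change-of-coordinates principle) so that the resulting disk set does not contain $c'$; residual freeness of $\pi_1(\Sigma)$, compatible with the $\PSL(2,p)$ quotients of Lemma~\ref{L:F_n is RF}, provides an algebraic fallback.
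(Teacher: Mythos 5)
Your reduction scheme is sound as far as it goes, and it is genuinely different from the paper's proof: granting your separation claim, every element of the intersection $N$ fixes the isotopy class of each nonseparating simple closed curve, the Alexander method (equivalently, Ivanov's computation of the kernel of the action on the curve complex) gives $N \le Z(\Mod(\Sigma_g))$, and your handling of $g=2$ and of the $\Mod_0(H_\phi)$ statement is correct; the homological half of the separation claim is also essentially right. But there is a genuine gap exactly where you flag it: the case $[c'] = \pm[c]$ with $c'$ not isotopic to $c$. You never prove that some handlebody has $c$ as a meridian but not $c'$; you only name two candidate strategies, and neither goes through as stated. For the ``explicit'' route, being a meridian means lying in the full disk set of the handlebody, not in the chosen cut system, so the freedom in completing $c$ to a cut system gives no visible control over whether $c'$ bounds a disk; what is really needed is a $3$--manifold input, e.g.\ the loop theorem together with the fact that the compression body obtained from $\Sigma \times [0,1]$ by attaching a single $2$--handle along $c$ has a unique essential disk up to isotopy, which is what shows that an essential simple closed curve whose class dies in $\pi_1(\Sigma)/\langle\langle c\rangle\rangle$ must be isotopic to $c$. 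For the ``algebraic fallback,'' residual freeness of $\pi_1(\Sigma)/\langle\langle c \rangle\rangle \cong \pi_1(\Sigma_{g-1}) * \Z$ only produces a homomorphism to \emph{some} free group not killing the image of $c'$; to produce a handlebody you need an epimorphism onto $F_g$ of rank exactly $g$ (this is what the citation of Lemma 2.2 of \cite{MR1885215} requires), and upgrading the separating homomorphism to such an epimorphism, while keeping $c$ in the kernel and $c'$ out of it, is an additional step you do not supply. Moreover, nontriviality of the image of $c'$ in that quotient is itself the loop-theorem statement above, which you also have not established.

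For contrast, the paper sidesteps any curve-separation statement entirely. Since conjugating $\Mod(H_\phi)$ by a mapping class yields another handlebody subgroup, the intersection $N$ is normal in $\Mod(\Sigma_g)$; a normal subgroup is either finite or has limit set equal to the whole Thurston boundary, and Masur's theorem \cite{MR837978} that the limit set of a handlebody subgroup is nowhere dense rules out the latter, so $N$ is finite. The classification of finite normal subgroups of $\Mod(\Sigma_g)$ (trivial for $g \ge 3$, the hyperelliptic $\Z/2\Z$ for $g=2$) then finishes the first statement, and the $\Mod_0$ statement follows as in your last step. Your approach could be salvaged by actually proving the separation claim via the compression-body/loop-theorem argument sketched above, but as written the decisive step is missing, whereas the paper's normality-plus-limit-set argument requires no such separation.
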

\begin{proof} In \cite{MR837978}, Masur proved that the limit set of the handlebody subgroup in the Thurston boundary of Teichm\"uller space is a nowhere dense subset.  The intersection of all handlebody subgroups is a normal subgroup and so is either finite, or else has limit set equal to the entire Thurston boundary.  By Masur's result, we must be in the former case, and hence the intersection of handlebody subgroups is finite.  But $\Mod(\Sigma_g)$ has no nontrivial finite, normal subgroups if $g \geq 3$, while for $g =2$, the only nontrivial, finite normal subgroup is the order-two subgroup generated by the hyperelliptic involution.   This proves the first statement.  The second follows from the first and the fact that the hyperelliptic involution of $\Sigma_2$ induces a nontrivial automorphism of $F_2 \cong \pi_1(H)$, for any homeomorphism $\phi \colon \Sigma_2 \to H$.
\end{proof}

\begin{proof}[Proof of Theorem~\ref{thm:res finite} for $\Mod(\Sigma_g)$, with $g \geq 2$.]  An equivalent formulation of residual finiteness is that the intersection of all finite index subgroups is trivial.  Therefore Theorem~\ref{thm:sep handlebody subgroups} and Lemma~\ref{lem:trivial handlebody intersection} immediately implies the result.
\end{proof}

\bibliography{refs.bib}
\bibliographystyle{amsalpha}


\noindent
Khalid Bou-Rabee \\
Department of Mathematics, CCNY CUNY \\
E-mail: khalid.math@gmail.com \\

\noindent
Christopher Leininger \\
Department of Mathematics, University of Illinois Urbana-Champaigne \\
E-mail: c.j.leininger95@gmail.com \\

\end{document}